\newcommand{\R}{\mathbb{R}}
\newcommand{\N}{\mathbb{N}}
\newcommand{\E}{\mathbb{E}}
\newcommand{\PP}{\mathbb{P}}
\newcommand{\<}{\ensuremath{\langle}}
\renewcommand{\>}{\ensuremath{\rangle}}
\newcommand{\p}{\ensuremath{\partial}}
\newcommand{\abs}[1]{\left|#1\right|}
\newcommand{\paren}[1]{\left( #1 \right)}
\newcommand{\cparen}[1]{\left\{ #1 \right\}}
\newcommand{\bparen}[1]{\left[ #1 \right]}
\newcommand{\indi}[1]{\textbf{1}_{#1}}
\newcommand{\mean}[1]{\mathrm{mean}(#1)}
\newcommand{\measdott}[2]{\dot{#1}_{#2}}
\newcommand{\mudott}[1]{\measdott{\mu}{#1}}
\newcommand{\mudot}[1][]{\mudott{t}}
\newcommand{\independent}{\perp\!\!\!\!\perp} 
\newcommand{\law}[1]{\text{Law}\paren{#1}}
\newcommand{\divergence}[1]{\textup{div}\paren{#1}}
\newcommand{\trace}[1]{\mathrm{Tr}\paren{#1}}
\newcommand{\staticT}[2]{\overline{\mc{T}}^{#1, #2}}
\newcommand{\dynT}[2]{\mc{BB}^{#1, #2}}
\newcommand{\mc}{\mathcal}
\newcommand{\mr}{\mathrm}
\def\XXint#1#2#3{{\setbox0=\hbox{$#1{#2#3}{\int}$ }
\vcenter{\hbox{$#2#3$ }}\kern-.6\wd0}}
\DeclareRobustCommand\widecheck[1]{{\mathpalette\@widecheck{#1}}}
\def\@widecheck#1#2{%
    \setbox\z@\hbox{\m@th$#1#2$}%
    \setbox\tw@\hbox{\m@th$#1%
       \widehat{%
          \vrule\@width\z@\@height\ht\z@
          \vrule\@height\z@\@width\wd\z@}$}%
    \dp\tw@-\ht\z@
    \@tempdima\ht\z@ \advance\@tempdima2\ht\tw@ \divide\@tempdima\thr@@
    \setbox\tw@\hbox{%
       \raise\@tempdima\hbox{\scalebox{1}[-1]{\lower\@tempdima\box
\tw@}}}%
    {\ooalign{\box\tw@ \cr \box\z@}}}
\newtheorem{thm}{Theorem}
\newtheorem{lemma}[thm]{Lemma}
\newtheorem{prop}[thm]{Proposition}
\newtheorem{cor}[thm]{Corollary}
\theoremstyle{definition}
\newtheorem{remark}[thm]{Remark}
\author{Ivan Guo}
\address{Ivan Guo\newline
\mbox{}\hspace{0.3cm} School of Mathematics \newline
\mbox{}\hspace{0.3cm} Monash University}
\email{Ivan.Guo@monash.edu}
\author{Severin Nilsson}
\address{Severin Nilsson \newline
\mbox{}\hspace{0.3cm} Department of Mathematics \newline
\mbox{}\hspace{0.3cm} Carnegie Mellon University}
\email{snilsson@andrew.cmu.edu}
\author{Johannes Wiesel}
\address{Johannes Wiesel \newline
\mbox{}\hspace{0.3cm} Department of Mathematics \newline
\mbox{}\hspace{0.3cm} University of Copenhagen}
\email{wiesel@math.ku.dk}
\title[Dynamic characterization of barycentric OT] {Dynamic characterization of barycentric optimal transport problems and their martingale relaxation}
\date{\today}
\begin{document}

\begin{abstract}
We extend the Benamou-Brenier formula from classical optimal transport to weak optimal transport and show that the barycentric optimal transport problem studied by Gozlan and Juillet has a dynamic analogue. We also investigate a martingale relaxation of this problem, and relate it to the martingale Benamou-Brenier formula of Backhoﬀ-Veraguas, Beiglb\"ock, Huesmann and K\"allblad.
\end{abstract}

\maketitle

\section{Introduction and main results}

Let $\mu$ and $\nu$ be two probability measures on $\R^d$ with finite second moments. The optimal transport problem with quadratic cost is given by
\begin{equation} \label{eq:OT}
    \mc{T}_2(\mu, \nu) 
    =
    \inf_{\pi \in \Pi(\mu, \nu)} \int |x-y|^2\,\pi(\mr{d}x, \mr{d}y),
    \tag{OT}
\end{equation}
where $\Pi(\mu, \nu)$ denotes the set of couplings between $\mu$ and $\nu$, i.e.,
\[
    \pi \in \Pi(\mu, \nu) 
    \iff
    \pi(A \times \R^d) = \mu(A) 
    ~ \text{and} ~ 
    \pi(\R^d \times A) = \nu(A) \quad \forall A \subseteq \R^d \text{ Borel;}
\]
see \cite{villani2021topics, santambrogio2015optimal} for an overview.
In the seminal work \cite{Benamou2000-yk} it is shown that solving $\mc{T}_2(\mu, \nu)$ is equivalent to minimizing the total energy along absolutely continuous curves $(\mu_t)_{t \in [0, 1]}$ from $\mu$ to $\nu$; to be precise,
\begin{equation} \label{eq:DOT}
    \mc{T}_2(\mu, \nu)
    =
    \inf_{(\mu_t, v_t)}
        \int_0^1 \int_{\R^d} \abs{v_t}^2 \mr{d}\mu_t \mr{d}t,
\end{equation}
where the infimum is taken over all $(\mu_t, v_t)$ such that $\mu_0 = \mu, \mu_1 = \nu$, and $(\mu_t, v_t)$ solves
\[
    \p_t \mu_t + \divergence{v_t \mu_t} = 0
\]
in the sense of distributions. Problem \eqref{eq:DOT} is known as the dynamic formulation of optimal transport, or the Benamou-Brenier formula. It has the probabilistic representation
\begin{align}\label{eq:DOT2}
    \mc{T}_2(\mu, \nu) 
    = 
    \inf \cparen{
    \E \bparen{ \int_0^1 \abs{v_t}^2 \mr{d}t }
    :\,
    \mr{d} X_t = v_t \mr{d}t
    ~ \text{where} ~ 
    X_0 \sim \mu, X_1 \sim \nu
    }.
    \tag{DOT}
\end{align}
In this note we extend the Benamou-Brenier formula to the so-called barycentric weak optimal transport problem. Introduced in the series of papers \cite{Gozlan2017-qo, Gozlan2018-fm}, this problem is defined as 
\begin{equation} \label{eq:WOT}
    \overline{\mc{T}}_2(\mu, \nu) 
    := 
    \inf_{\pi \in \Pi(\mu, \nu)} \int \abs{\mean{\pi_x} - x}^2 \,\mu(\mr{d} x),
    \tag{WOT}
\end{equation}
where the map $(\pi_x)_{x\in \R^d}$ is the disintegration of $\pi$ with respect to $\mu$ and 
$\mean{\rho} := \int y \,\rho(\mr{d} y)$ for any integrable probability measure $\rho$. Weak optimal transport covers the settings of martingale optimal transport \cite{beiglbock2013model, beiglbock2016problem}, entropic optimal transport \cite{conforti2019second, nutz2021introduction} and semi-martingale optimal transport \cite{tan2014optimal,guo2021path, benamou2024entropic}, among others;  see also the related works \cite{marton1996bounding, marton1996measure, talagrand1995concentration, talagrand1996new, fathi2018curvature, alibert2019new, bowles2018theory, fathi2018curvature, shu2020hopf} It has recently proved to be an extremely versatile tool in OT.
Intuitively, $\overline{\mc{T}}_2(\mu, \nu)$ measures how far $\mu$ and $\nu$ are away from being the marginals of a one-step martingale.  \cite{Gozlan_2020} show that 
\begin{align*}
\overline{\mc{T}}_2(\mu, \nu)=   \inf_{\eta\preceq_{c}\nu}  
\mc{T}_2(\mu, \eta),
\end{align*}
where $\preceq_c$ denotes convex order, i.e.~$\eta\preceq_{c}\nu$ if $\int f\mr{d}\eta\le \int f\mr{d}\nu$ for all convex functions $f:\R^d\to \R.$
Our first main result is the following dynamic characterization of 
$\overline{\mc{T}}_2$:

\begin{thm} \label{thm:intro1}
We have
\begin{align*}
\overline{\mc{T}}_2(\mu, \nu)
    =
    \inf \cparen{
    \E\bparen{ 
        \int_0^1 \abs{v_t}^2}:\,\mr{d}X_t = v_t \mr{d}t + \sigma_t \mr{d}B_t, ~
    X_0 \sim \mu, X_1 \sim \nu},
\end{align*}
where the infimum is taken over predictable processes $v$ and $\sigma$.
\end{thm}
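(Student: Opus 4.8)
The plan is to establish the two inequalities separately, using the characterization $\overline{\mc{T}}_2(\mu,\nu) = \inf_{\eta \preceq_c \nu} \mc{T}_2(\mu,\eta)$ of Gozlan-Juillet together with the classical Benamou-Brenier formula \eqref{eq:DOT2}. Denote the right-hand side of the claimed identity by $\mc{BB}(\mu,\nu)$.

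For the inequality $\mc{BB}(\mu,\nu) \le \overline{\mc{T}}_2(\mu,\nu)$, I would start from a near-optimal $\eta \preceq_c \nu$ and a near-optimal deterministic transport for $\mc{T}_2(\mu,\eta)$, giving a process $\mr{d}X_t = v_t\,\mr{d}t$ with $X_0 \sim \mu$, $X_1 \sim \eta$ and $\E\int_0^1 |v_t|^2\,\mr{d}t$ close to $\mc{T}_2(\mu,\eta)$. Since $\eta \preceq_c \nu$, Strassen's theorem provides a martingale coupling between $\eta$ and $\nu$; the key point is to realize this martingale step as a continuous-time diffusion with zero drift on a second time interval, i.e.\ to find $\sigma_t$ with $\mr{d}X_t = \sigma_t\,\mr{d}B_t$ transporting $\eta$ to $\nu$ over $t \in [1,2]$. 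This is exactly a (one-marginal-constrained) martingale problem; existence follows from standard results on constructing martingales with prescribed terminal law in convex order with the initial law — e.g.\ via a Bass-type or Dupire-type construction, or simply by running an SDE associated to the martingale coupling. Concatenating the two pieces and rescaling time from $[0,2]$ to $[0,1]$ (which only rescales $v$ and $\sigma$, leaving $\int |v_t|^2\,\mr{d}t$ unchanged up to the reparametrization, and in fact one can absorb the reparametrization cleanly since the diffusive part contributes nothing to the cost), one obtains an admissible $(v,\sigma)$ with cost arbitrarily close to $\mc{T}_2(\mu,\eta)$, hence to $\overline{\mc{T}}_2(\mu,\nu)$.

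For the reverse inequality $\overline{\mc{T}}_2(\mu,\nu) \le \mc{BB}(\mu,\nu)$, take any admissible $(v,\sigma)$ with $\mr{d}X_t = v_t\,\mr{d}t + \sigma_t\,\mr{d}B_t$, $X_0 \sim \mu$, $X_1 \sim \nu$. Define $\eta := \law{\E[X_1 \mid X_0]}$, i.e.\ $\eta$ is the law of the barycenter map $x \mapsto \mean{\pi_x}$ pushed forward under $\mu$, where $\pi = \law{(X_0,X_1)}$. By Jensen's inequality applied to convex functions, $\eta \preceq_c \nu$, so $\overline{\mc{T}}_2(\mu,\nu) \le \mc{T}_2(\mu,\eta) \le \int |x - \mean{\pi_x}|^2\,\mu(\mr{d}x)$. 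It remains to bound this last quantity by $\E\int_0^1 |v_t|^2\,\mr{d}t$. Writing $\mean{\pi_x} - x = \E[X_1 - X_0 \mid X_0]$ and using that the stochastic integral $\int_0^1 \sigma_t\,\mr{d}B_t$ is a martingale (so it vanishes in conditional expectation given $X_0 = \mathcal{F}_0$-measurable data), we get $\E[X_1 - X_0 \mid X_0] = \E\big[\int_0^1 v_t\,\mr{d}t \mid X_0\big]$. Then by conditional Jensen and Cauchy-Schwarz, $\E|\mean{\pi_{X_0}} - X_0|^2 \le \E\big|\int_0^1 v_t\,\mr{d}t\big|^2 \le \E\int_0^1 |v_t|^2\,\mr{d}t$, as desired.

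The main obstacle is the construction in the first inequality: turning an abstract martingale coupling between $\eta$ and $\nu$ into a genuine continuous-time diffusion $\mr{d}X_t = \sigma_t\,\mr{d}B_t$ with a predictable $\sigma$, adapted to the \emph{same} filtration carrying $X$ and starting from the endpoint of the first-phase process. One must be slightly careful that the concatenation remains adapted and that the time reparametrization to $[0,1]$ does not corrupt predictability or inflate the kinetic cost; since the martingale phase carries zero $v$, a standard argument (e.g.\ squeezing the martingale phase into $[1-\epsilon,1]$ and the transport phase into $[0,1-\epsilon]$, then letting $\epsilon \to 0$, or using a fixed $[0,2] \to [0,1]$ rescaling) handles this. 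A clean way to sidestep delicate SDE constructions is to invoke the martingale Benamou-Brenier machinery of Backhoff-Veraguas, Beiglböck, Huesmann and Källblad referenced in the abstract, which already produces such diffusions between measures in convex order; I would cite that as the black box for the martingale leg.
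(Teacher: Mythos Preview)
Your argument is correct and follows essentially the same strategy as the paper: a two-phase construction (deterministic transport on $[0,1-\varepsilon]$, then a zero-drift diffusion on $[1-\varepsilon,1]$, then $\varepsilon\to 0$) for one inequality, and the Jensen-based bound $\E\big|\E[X_1-X_0\mid X_0]\big|^2 \le \E\int_0^1|v_t|^2\,\mr{d}t$ for the other. Two differences are worth noting. First, you take the Gozlan--Juillet identity $\overline{\mc{T}}_2(\mu,\nu)=\inf_{\eta\preceq_c\nu}\mc{T}_2(\mu,\eta)$ as an input, whereas the paper works directly from the definition of $\overline{\mc{T}}_2$: for $\overline{\mc{T}}_2\le\dynT{1}{0}$ it simply observes that $\pi=\law{X_0,X_1}$ is an admissible coupling, and for $\dynT{1}{0}\le\overline{\mc{T}}_2$ it starts from an arbitrary $(X_0,Y)\sim\pi\in\Pi(\mu,\nu)$ and uses the constant drift $v_t\equiv\E[Y\mid X_0]-X_0$. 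Combined with a separate lemma (which is exactly your argument), the paper then \emph{recovers} the Gozlan--Juillet identity as a corollary rather than assuming it. Second, for the martingale leg you propose invoking Bass/Dupire constructions or the martingale Benamou--Brenier machinery as a black box; this is heavier than necessary. The paper simply takes a one-step martingale coupling from Strassen, realizes $M_1$ as a measurable function of $(M_0,B_1)$, sets $M_t=\E[M_1\mid \sigma(M_0,(B_s)_{s\le t})]$, and reads off $\sigma$ from the martingale representation theorem.
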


Compared to \eqref{eq:DOT2}, the dynamic formulation in Theorem \ref{thm:intro1} allows for a costless martingale transport via the diffusion term $\sigma_t \mr{d}B_t$; on the flip side $\overline{\mc{T}}_2(\mu, \nu)$ penalizes only the deviation of $x\mapsto \mean{\pi_x}$ from the identity. 

We note that the dynamic formulation in Theorem \ref{thm:intro1} is different from the entropic projection problem, also known as the Schr\"odinger bridge,
\begin{align*}
\inf \cparen{
    \E\bparen{ 
        \int_0^1 \abs{v_t}^2}\,\mr{d}t:\,\mr{d}X_t = v_t \mr{d}t + \mr{d}B_t ~ \text{where} ~ 
    X_0 \sim \mu, X_1 \sim \nu},
\end{align*}
see \cite{schrodinger1932theorie, follmer2006random},
where the infimum is taken over the drift $v$ only and $\sigma$ is identically equal to the identity matrix. The Schr\"odinger bridge minimizes the Kullback-Leibler divergence of the law of $X$ with respect to the Wiener measure, rather than a cost function on the marginals.

As mentioned above, $\overline{\mc{T}}_2(\mu, \nu)$ essentially allows for arbitrary martingale transports, as $\sigma$ does not influence the cost $\E[\int_0^1  \abs{v_t}^2 dt]$. It is thus natural to extend our analysis to the functional
\begin{equation*}
\staticT{\alpha}{\beta}(\mu, \nu) 
    :=
    \inf_{\pi \in \Pi(\mu, \nu)} 
    \int 
        \alpha \abs{\mean{\pi_x} - x}^2 
        - 
        \beta \mr{MCov}(\pi_x, \gamma_1^d)
    \,\mu(\mr{d}x) 
\end{equation*}
for $\alpha, \beta> 0$, see \cite[Section 1.1.6]{beiglböck2025fundamentaltheoremweakoptimal} . In the above, the maximal covariance
\begin{align*}
    \mr{MCov}(\rho, \varrho)
    :=
    \sup_{\pi \in \Pi(\rho, \varrho)} 
    \int \langle y, z \rangle \,\pi(\mr{d}y, \mr{d}z), \quad \rho, \varrho \in \mathcal{P}_2(\R^d),
\end{align*}
measures the $2$-Wasserstein distance of the disintegration $\pi_x$ from the $d$-dimensional standard normal distribution $\gamma_1^d$, up to terms that do not depend on the coupling $\pi$. \\
One of the main results of \cite{backhoffveraguas2019martingalebenamoubrenierprobabilisticperspective} is the representation
\begin{align}\label{eq:kallblad}
\begin{split}
&\sup_{\pi \in \Pi_M(\mu, \nu)}
    \int
        \mr{MCov}(\pi_x, \gamma_1^d)
    \,\mu(\mr{d}x) \\
    &\qquad = \sup \cparen{
        \E \bparen{ \int_0^1 \trace{\sigma_t} \mr{d}t}
        :
        \mr{d}X_t = \sigma_t \mr{d}B_t, ~ X_0 \sim \mu, X_1 \sim \nu
    },
\end{split}
\end{align}
where 
\begin{align}\label{eq:MartMeas}
    \Pi_M(\mu, \nu)
    =
    \cparen{ \pi \in \Pi(\mu, \nu) : \mean{\pi_x} = x  \quad \forall x\in \R^d}
\end{align}
is the set of martingale measures with marginals $
\mu$ and $\nu$ and we recall that $\Pi_M(\mu,\nu)\neq \emptyset$ if and only if $\mu\preceq_c \nu$; see \cite{strassen1965existence}. The solution of \eqref{eq:kallblad} is given by a so-called stretched Brownian motion.
Equation \eqref{eq:kallblad} corresponds to $\staticT{0}{1}$ in our notation above. Our second main result result gives a similar representation of $\staticT{\alpha}{\beta}$ for the intermediate case $\alpha, \beta>0$.

\begin{thm} \label{thm:intro2}
For $\alpha, \beta > 0$ and $\mu, \nu \in \mc{P}_2(\R^d)$ we have 
\begin{align*}
    &\staticT{\alpha}{\beta}(\mu, \nu)\\
    &\quad =
    \inf \cparen{
    \E\bparen{ 
        \int_0^1 \alpha \abs{v_t}^2 
        - 
        \beta\paren{\<B_t, v_t \> + \trace{\sigma_t}} 
    \mr{d}t}: \,\mr{d}X_t = v_t \mr{d}t + \sigma \mr{d}B_t, ~
    X_0 \sim \mu, X_1 \sim \nu},
\end{align*}
where the infimum is taken over all predictable processes $v$ and $\sigma.$ The right hand side is attained by the process
\[
    \mr{d}X_t 
    = 
    (\nabla \varphi(X_0) - X_0) \mr{d}t + \sigma_t \mr{d}B_t
    \quad \text{with} \quad
    X_0 \sim \mu,
\]
where the $1$-Lipschitz map $\nabla \varphi$ is given in Proposition \ref{thm:weakOTThm} and $\sigma$ is given in Proposition \ref{prop:essential} below.
\end{thm}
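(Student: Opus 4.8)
The plan is to establish the two inequalities separately, noting that the process built for the easier (``$\le$'') direction is already the asserted optimiser. Throughout, for an admissible process $\mr{d}X_t=v_t\mr{d}t+\sigma_t\mr{d}B_t$ with $X_0\sim\mu$, $X_1\sim\nu$, I write $\pi:=\law{X_0,X_1}\in\Pi(\mu,\nu)$ and $(\pi_x)_x$ for its disintegration against $\mu$; I will freely restrict to the (value-preserving) subclass of processes with $\E[\int_0^1|v_t|^2\mr{d}t]+\E[\int_0^1\norm{\sigma_t}^2\mr{d}t]<\infty$, so that all stochastic integrals below are true martingales, and I use that $B$ is a Brownian motion with respect to the underlying filtration, so $B_1$ is independent of $\mc{F}_0$ while $X_0$ is $\mc{F}_0$-measurable.

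\textbf{Lower bound} ($\staticT{\alpha}{\beta}(\mu,\nu)\le$ dynamic value). I treat the two terms of the cost separately. For the quadratic term, Jensen's inequality in the time variable followed by conditional Jensen given $X_0$ gives $\E[\int_0^1|v_t|^2\mr{d}t]\ge\E\big[\,\abs{\E[\int_0^1 v_t\,\mr{d}t\mid X_0]}^2\big]$; since $\E[\int_0^1\sigma_t\,\mr{d}B_t\mid X_0]=0$ one has $\E[\int_0^1 v_t\,\mr{d}t\mid X_0=x]=\mean{\pi_x}-x$, so this is $\int|\mean{\pi_x}-x|^2\,\mu(\mr{d}x)$. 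For the cross term, Itô's formula applied to $\langle B_t,\,X_t-X_0\rangle$, followed by cancellation of the two martingale contributions, yields
\[
\E\!\left[\int_0^1\!\big(\langle B_t,v_t\rangle+\trace{\sigma_t}\big)\mr{d}t\right]=\E\big[\langle B_1,\,X_1-X_0\rangle\big]=\E\big[\langle B_1,X_1\rangle\big],
\]
the last step because $\E[\langle B_1,X_0\rangle]=\E[\langle\E[B_1\mid\mc{F}_0],X_0\rangle]=0$. Conditioning on $X_0=x$, the conditional law of $(X_1,B_1)$ is a coupling of $\pi_x$ and $\gamma_1^d$, so $\E[\langle B_1,X_1\rangle\mid X_0=x]\le\mr{MCov}(\pi_x,\gamma_1^d)$. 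Since $\alpha,\beta>0$, combining the two estimates gives dynamic value $\ge\int\alpha|\mean{\pi_x}-x|^2-\beta\,\mr{MCov}(\pi_x,\gamma_1^d)\,\mu(\mr{d}x)\ge\staticT{\alpha}{\beta}(\mu,\nu)$; take the infimum over admissible processes.

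\textbf{Upper bound and optimality.} Start from an optimiser $\pi^\star$ of the static problem; by Proposition \ref{thm:weakOTThm} its barycentric map is a $1$-Lipschitz gradient, $\mean{\pi^\star_x}=\nabla\varphi(x)$. Put $\eta:=(\nabla\varphi)_\#\mu$. Pushing $\pi^\star$ forward by $(x,y)\mapsto(\nabla\varphi(x),y)$ gives a martingale coupling of $\eta$ and $\nu$, so $\eta\preceq_c\nu$; moreover, using that $\rho\mapsto\mr{MCov}(\rho,\gamma_1^d)$ is concave — so that replacing $\pi^\star_x$ by its $\mu$-average over the fibres of $\nabla\varphi$ does not decrease the $\mr{MCov}$ term — one obtains the decomposition
\[
\staticT{\alpha}{\beta}(\mu,\nu)=\alpha\!\int\!|\nabla\varphi(x)-x|^2\,\mu(\mr{d}x)-\beta\!\sup_{\pi'\in\Pi_M(\eta,\nu)}\!\int\!\mr{MCov}(\pi'_y,\gamma_1^d)\,\eta(\mr{d}y),
\]
which is the content of (or follows readily from) Proposition \ref{thm:weakOTThm}. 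By \eqref{eq:kallblad} applied to the pair $(\eta,\nu)$, the supremum equals $\E[\int_0^1\trace{\bar\sigma_t}\mr{d}t]$ for the stretched Brownian motion $\mr{d}Y_t=\bar\sigma_t\,\mr{d}B_t$, $Y_0\sim\eta$, $Y_1\sim\nu$ — this $\bar\sigma$ is the process $\sigma$ of Proposition \ref{prop:essential}. Realising $Y$ on a space also carrying $X_0\sim\mu$ with $Y_0=\nabla\varphi(X_0)$ and setting $X_t:=X_0+t\,(\nabla\varphi(X_0)-X_0)+\int_0^t\bar\sigma_s\,\mr{d}B_s$, we obtain an admissible process with $X_1=\nabla\varphi(X_0)+(Y_1-Y_0)=Y_1\sim\nu$ and constant, $\mc{F}_0$-measurable drift $v_t\equiv\nabla\varphi(X_0)-X_0$, hence $\E[\int_0^1\langle B_t,v_t\rangle\mr{d}t]=0$ since $\E[B_t\mid\mc{F}_0]=0$; its quadratic cost is $\alpha\int|\nabla\varphi(x)-x|^2\,\mu(\mr{d}x)$ and its trace cost is $\beta\E[\int_0^1\trace{\bar\sigma_t}\mr{d}t]$, so the total cost is exactly $\staticT{\alpha}{\beta}(\mu,\nu)$. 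This proves the reverse inequality and exhibits the claimed optimiser.

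The conceptual core is the identity $\E[\int_0^1(\langle B_t,v_t\rangle+\trace{\sigma_t})\mr{d}t]=\E[\langle B_1,X_1\rangle]$ together with the $\mr{MCov}$ bound. The two places that need genuine care are (i) the true-martingale (rather than merely local-martingale) status of $\int\langle X_t-X_0,\mr{d}B_t\rangle$ and $\int\langle B_t,\sigma_t\,\mr{d}B_t\rangle$, which I expect to handle by the standard localisation on the admissible class mentioned above, and (ii) the static decomposition displayed in the third paragraph — in particular justifying the fibre-averaging along $\nabla\varphi$ and invoking \eqref{eq:kallblad} with the \emph{induced} marginals $(\eta,\nu)$ — which is where the structural input of Propositions \ref{thm:weakOTThm} and \ref{prop:essential} is really used; I expect (ii) to be the main obstacle.
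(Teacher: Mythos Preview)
Your proof follows essentially the same route as the paper's. The lower bound is identical: the paper also reduces to $\E[\langle X_1,B_1\rangle]$ and bounds it by $\int\mr{MCov}(\pi_x,\gamma_1^d)\,\mu(\mr{d}x)$ via the sub-optimal coupling $\law{X_1,B_1\mid X_0=x}$. The upper bound also constructs exactly the same process and verifies the same three identities (quadratic cost, vanishing cross term, trace term via \eqref{eq:kallblad}).

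The one substantive slip is your attribution in the upper bound. You write ``Start from an optimiser $\pi^\star$ of the static problem; by Proposition~\ref{thm:weakOTThm} its barycentric map is $\nabla\varphi$.'' Proposition~\ref{thm:weakOTThm} characterises optimisers of $\overline{\mc T}_2$, not of $\staticT{\alpha}{\beta}$; it says nothing about the barycentric map of a $\staticT{\alpha}{\beta}$-optimiser. The paper sidesteps this by invoking Proposition~\ref{prop:essential} directly: that proposition already asserts that the specific coupling $\pi=\mu\otimes\kappa_{\nabla\varphi(x)}$, with $\kappa$ the law of the stretched Brownian motion from $\nabla\varphi_{\#}\mu$ to $\nu$, is optimal for $\staticT{\alpha}{\beta}$. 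Once you take this $\pi$ as your $\pi^\star$, the displayed decomposition is immediate (no fibre-averaging needed, since $\pi_x=\kappa_{\nabla\varphi(x)}$ already factors through $\nabla\varphi$), and your whole paragraph about concavity of $\mr{MCov}$ and averaging over the fibres of $\nabla\varphi$ becomes unnecessary. Your concavity observation is correct (Kantorovich duality writes $\mr{MCov}(\cdot,\gamma_1^d)$ as an infimum of affine functionals) and the fibre-averaging argument would go through, but it amounts to re-proving part of Proposition~\ref{prop:essential}; the paper simply cites it. This is exactly the obstacle you flagged as (ii), and it dissolves once you use the right proposition.
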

Note that Theorem \ref{thm:intro1} can be formally obtained from Theorem \ref{thm:intro2} by taking $\alpha=1, \beta\to 0$; similarly \eqref{eq:kallblad} can be obtained by setting $\alpha\to \infty, \beta=1$. Let us also remark that one can actually restrict the minimization in Theorem \ref{thm:intro2} to drifts $v$ that are independent of $B$, leading to $\E[\langle B_t, v_t\rangle]=0$. This follows from the proof of Theorem \ref{thm:intro2} below. The dynamic formulation in Theorem \ref{thm:intro2} can also be seen as a version of the semimartingale optimal transport problem.

\section{Notation}

We write $\mc{P}_2(\R^d)$ for the set of (Borel) probability measures with finite second moments. We let $\< \cdot, \cdot \>$ denote the standard inner product on $\R^d$ and for $x \in \R^d$ we write $\abs{x}^2 = \<x, x \> $. For a probability measure $\mu$ on $\R^d$ and a function $\kappa:\R^d\to \mathcal{P}(\R^d)$ we define $(\mu \otimes \kappa_x)(A\times B) := \int_A \kappa_x(B)\,\mu(\mr{d}x)$ for all Borel sets $A,B \subseteq \R^d$. Next, we write $(\pi_x)_{x\in \R^d}$ for the disintegration of $\pi \in \Pi(\mu,\nu)$ wrt.~$\mu$, i.e.~$x\mapsto \pi_x(A)$ is Borel measurable for all Borel sets $A\subseteq \R^d$ and satisfies $\mu\otimes\pi_x=\pi.$ Lastly we define the push-forward measure of a function $f:\R^d\to \R^k$ under $\mu$ as $f_{\#}\mu(A):= \mu(\{x\in \R^d:\, f(x)\in A\})$ for all Borel sets $A\subseteq \R^k$, $k\in \N.$

We say that a process $X$ is an admissible diffusion process if there exists a filtered probability space $(\Omega, \mc{F}, (\mc{F}_t)_{t \in [0, 1]}, \PP)$ which supports a standard Brownian motion $(B_t)_{t \in [0, 1]}$ with $X_0 \independent (B_t)_{t\in [0,1]}$ and predictable processes $v \in L^2(\PP \otimes \mr{d}t; \R^d)$ and $\sigma \in L^2(\PP \otimes \mr{d}t; \R^{d\times d})$ such that
\[
    \mr{d}X_t = v_t \mr{d}t + \sigma_t \mr{d}B_t.
\]
For $\mu, \nu \in \mc{P}_2(\R^d)$, we denote by $\mc{D}(\mu, \nu)$ the set of all admissible diffusion processes $X$ with $X_0 \sim \mu$ and $X_1 \sim \nu$. We set $\gamma_t^d := \law{B_t}$. 
We also define 
\begin{equation*}
    \dynT{\alpha}{\beta}(\mu, \nu) 
    :=
    \inf_{X \in \mc{D}(\mu, \nu)}
    \E\bparen{ 
        \int_0^1 \alpha \abs{v_t}^2 
        - 
        \beta\paren{\<B_t, v_t \> + \trace{\sigma_t}} 
    \mr{d}t}.
\end{equation*}
Using this more compact notation, Theorem \ref{thm:intro1} reads
$
    \overline{\mc{T}}_2
    =
    \dynT{1}{0},
$
while Theorem \ref{thm:intro2} reads
$
    \staticT{\alpha}{\beta}
    =
    \dynT{\alpha}{\beta} 
$
for $\alpha, \beta > 0$.

\section{Preliminary results}

Before we turn to the proofs of Theorems \ref{thm:intro1} and \ref{thm:intro2}, we need to investigate the relation between two results, which were mentioned in the introduction.

\begin{prop}[{\cite[Theorem 2.2.]{backhoffveraguas2019martingalebenamoubrenierprobabilisticperspective}}] \label{thm:MBB}
Let $\mu, \nu \in \mc{P}_2(\R^d)$ with $\mu\preceq_c \nu$. Then 
\eqref{eq:kallblad} holds and the problem
\begin{equation*}
    \sup \cparen{
        \E \bparen{ \int_0^1 \trace{\sigma_t} \mr{d}t}
        :
        \mr{d}X_t = \sigma_t \mr{d}B_t, X_0 \sim \mu, X_1 \sim \nu
    }.
\end{equation*}
admits a unique (in law) maximizer $\widehat{M}$.
\end{prop}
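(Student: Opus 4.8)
The plan is to prove \eqref{eq:kallblad} by establishing its two inequalities separately, and then to deduce uniqueness of the maximizer from strict convexity on the transport side; throughout, the crucial leverage is that $\gamma_1^d$ has a density, so optimal transport to $\gamma_1^d$ is rigid in the sense of Brenier. For the easy inequality — the dynamic supremum is at most the static one — let $X \in \mc{D}(\mu,\nu)$ with $\mr{d}X_t = \sigma_t\,\mr{d}B_t$, which is a true $L^2$ martingale since $\sigma \in L^2(\PP\otimes\mr{d}t)$. Applying Itô's product rule to $t \mapsto \langle X_t, B_t\rangle$ and taking expectations (the local-martingale terms vanish after a standard truncation, using $X_1, B_1 \in L^2$) yields $\E[\int_0^1 \trace{\sigma_t}\,\mr{d}t] = \E[\langle X_1 - X_0, B_1\rangle]$. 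Writing $\pi := \law{X_0, X_1}$ with disintegration $\pi_x = \law{X_1 \mid X_0 = x}$, the martingale property gives $\mean{\pi_x} = \E[X_1\mid X_0 = x] = x$, so $\pi \in \Pi_M(\mu,\nu)$, and $X_0 \independent B$ gives $\law{B_1\mid X_0 = x} = \gamma_1^d$. Hence, conditionally on $X_0 = x$, the pair $(X_1, B_1)$ is a coupling of $\pi_x$ and $\gamma_1^d$, so $\E[\langle X_1 - x, B_1\rangle \mid X_0 = x] = \E[\langle X_1, B_1\rangle\mid X_0 = x] \le \mr{MCov}(\pi_x, \gamma_1^d)$; integrating against $\mu$ closes this direction.

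For the reverse inequality, together with existence of the maximizer, I would first fix an optimizer $\pi \in \Pi_M(\mu,\nu)$ of the static problem: it exists because $\Pi_M(\mu,\nu)$ is nonempty by Strassen's theorem (as $\mu\preceq_c\nu$) and weakly compact, while $\pi\mapsto\int\mr{MCov}(\pi_x,\gamma_1^d)\,\mu(\mr{d}x)$ is weakly upper semicontinuous by standard weak-OT lower-semicontinuity results. For each $x$, Brenier's theorem (applicable since $\gamma_1^d$ has a density) provides a convex $\phi_x$ with $(\nabla\phi_x)_\#\gamma_1^d = \pi_x$ and $\int\langle\nabla\phi_x(g), g\rangle\,\gamma_1^d(\mr{d}g) = \mr{MCov}(\pi_x,\gamma_1^d)$, and a measurable-selection argument makes $x\mapsto\nabla\phi_x$ jointly measurable. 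On a probability space carrying a Brownian motion $B$ and an independent $X_0\sim\mu$, set $X_t := \E[\nabla\phi_{X_0}(B_1)\mid\mc{F}_t]$ with $\mc{F}_t = \sigma(X_0)\vee\mc{F}_t^B$. Then $X$ is a square-integrable martingale, so $\mr{d}X_t = \sigma_t\,\mr{d}B_t$ for a predictable $\sigma\in L^2(\PP\otimes\mr{d}t;\R^{d\times d})$ by the martingale representation theorem (valid because $B$ remains a Brownian motion in this enlarged filtration, by independence), the $L^2$ bound also giving $\E\int_0^1|\trace{\sigma_t}|\,\mr{d}t < \infty$; moreover $X_0 = \E[\nabla\phi_{X_0}(B_1)\mid\sigma(X_0)] = \mean{\pi_{X_0}} = X_0$ and $X_1 = \nabla\phi_{X_0}(B_1)\sim\nu$. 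Running the identity above in reverse, $\E[\int_0^1\trace{\sigma_t}\,\mr{d}t] = \E[\langle X_1 - X_0, B_1\rangle] = \int\mr{MCov}(\pi_x,\gamma_1^d)\,\mu(\mr{d}x)$, the static value; this proves \eqref{eq:kallblad}, and $\widehat{M} := X$ is a maximizer of the pure-diffusion problem.

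For uniqueness, let $X^\star$ be any maximizer. Chasing equality through the easy direction, for $\mu$-a.e.\ $x$ the coupling of $\pi^\star_x := \law{X^\star_1\mid X^\star_0 = x}$ with $\gamma_1^d$ induced by $(X^\star_1, B_1)$ given $X^\star_0 = x$ must be $\mr{MCov}$-optimal; by uniqueness of optimal transport to an absolutely continuous measure this forces $X^\star_1 = \nabla\phi^\star_{X^\star_0}(B_1)$ a.s., where $\phi^\star_x$ is the Brenier potential of $\pi^\star_x$ relative to $\gamma_1^d$. Since $X^\star$ is a martingale, $X^\star_t = \E[X^\star_1\mid\mc{F}_t] = \E[\nabla\phi^\star_{X^\star_0}(B_1)\mid\sigma(X^\star_0)\vee\mc{F}_t^B]$, so $\law{X^\star}$ is determined by $\mu$ together with the family $(\phi^\star_x)$, hence by $\pi^\star$. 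Finally $\pi^\star$ is unique: since $\int\!\!\int|y|^2\,\pi_x(\mr{d}y)\,\mu(\mr{d}x) = \int|y|^2\,\nu(\mr{d}y)$ is fixed and $\mr{MCov}(\pi_x,\gamma_1^d) = \tfrac12(\int|y|^2\,\pi_x(\mr{d}y) + d - W_2^2(\pi_x,\gamma_1^d))$, the static problem is equivalent to minimizing $F(\pi) := \int W_2^2(\pi_x,\gamma_1^d)\,\mu(\mr{d}x)$ over the convex set $\Pi_M(\mu,\nu)$; if $\pi^0\neq\pi^1$ were two minimizers, then $\pi^0_x\neq\pi^1_x$ on a set of positive $\mu$-measure, and there the joint convexity of $W_2^2$ under linear interpolation of measures holds with equality only if the averaged optimal couplings $\tfrac12(\mathrm{id},\nabla\phi^0_x)_\#\gamma_1^d + \tfrac12(\mathrm{id},\nabla\phi^1_x)_\#\gamma_1^d$ is itself the optimal coupling to $\gamma_1^d$ — which, being of Monge type and hence a.s.\ single-valued in the $\gamma_1^d$-fibre, forces $\nabla\phi^0_x = \nabla\phi^1_x$ $\gamma_1^d$-a.e.\ and so $\pi^0_x = \pi^1_x$, a contradiction. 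Thus $F$ is strictly convex along $\Pi_M(\mu,\nu)$, $\pi^\star$ is unique, and therefore so is $\widehat{M}$ in law.

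The main obstacle I anticipate is not the ``$\le$'' inequality but the careful construction in the ``$\ge$'' step: producing a jointly measurable selection $x\mapsto\nabla\phi_x$ of Brenier maps and verifying that the Bass-type martingale $\E[\nabla\phi_{X_0}(B_1)\mid\mc{F}_t]$ really is an $L^2$ stochastic integral against $B$ for which the trace identity is legitimate. Both are most cleanly handled by first approximating each $\pi_x$ by measures with smooth, strictly convex Brenier potentials and passing to the limit; notably, none of this requires any regularity of $\mu$ itself, since all transport is performed out of the (absolutely continuous) Gaussian side.
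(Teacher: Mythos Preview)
The paper does not prove Proposition~\ref{thm:MBB}; it is stated as a preliminary result quoted verbatim from \cite[Theorem~2.2]{backhoffveraguas2019martingalebenamoubrenierprobabilisticperspective}, and the authors only use its conclusion (and the attendant notion of stretched Brownian motion) as a black box in the proof of Theorem~\ref{thm:intro2}. So there is no in-paper argument to compare your proposal against.

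That said, your sketch is a faithful outline of how the cited reference actually proceeds: the It\^o product identity $\E[\langle X_1,B_1\rangle]=\E[\int_0^1\trace{\sigma_t}\,\mr{d}t]$ for the ``$\le$'' direction, the Bass-type construction $X_t=\E[\nabla\phi_{X_0}(B_1)\mid\mc{F}_t]$ for ``$\ge$'', and the strict-convexity/Brenier-rigidity argument for uniqueness are exactly the ingredients used there. The technical points you flag --- measurable selection of $x\mapsto\nabla\phi_x$, legitimacy of the martingale representation in the enlarged filtration $\sigma(X_0)\vee\mc{F}_t^B$, and the passage from $\E[X_1^\star\mid\mc{F}_t]$ to $\E[X_1^\star\mid\sigma(X_0^\star)\vee\mc{F}_t^B]$ in the uniqueness step --- are real and are dealt with carefully in that paper, so your self-diagnosis of the obstacles is accurate.
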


The authors call the maximizer $\widehat{M}$ a \emph{stretched Brownian motion}; $\widehat{M}$ is the martingale $M$ whose trajectories are as close as possible to Brownian motion in the adapted Wasserstein distance, while satisfying the marginal conditions $M_0 \sim \mu$ and $M_1 \sim \nu$ (see \cite[Section 6]{backhoffveraguas2019martingalebenamoubrenierprobabilisticperspective}).

In the follow-up paper \cite{backhoffveraguas2025existencebassmartingalesmartingale} it is shown that under an irreducibility condition\footnote{Two measures $\mu$ and $\nu$ are irreducible if for any martingale $M$ with $M_0 \sim \mu$ and $M_1 \sim \nu$ we have the implication $\mu(A), \nu(B) > 0 \implies \PP(M_0 \in A, M_1 \in B) > 0$ for any $A, B \subseteq \R^d$ Borel.} on $\mu$ and $\nu$, $\widehat{M}$ is a \emph{Bass martingale} between $\mu$ and $\nu$. Bass martingales, which go back to \cite{bass1983skorokhod} as a solution to the Skorokhod embedding problem, are martingales $M$ of the form
\[
    M_t = \E \bparen{ \nabla \phi(W_1) | W_t },
\]
where the Brownian motion $W$ is started at some $W_0 \sim \alpha$, $\phi : \R^d \to \R$ is a convex function and $\nabla \phi(W_1)$ is square integrable. 
Bass' construction can be viewed as a natural analogue of Brenier's Theorem \cite{brenier1991polar}, which states that for regular enough measures $\mu$ and $\nu$, the minimizing vector field $v_t$ appearing in the dynamic formulation on $\mc{T}_2(\mu, \nu)$ is of the form $v_t = \nabla \phi-\text{Id}$ for some convex function $\phi$. 

Next we recall the following result of \cite{Gozlan_2020}, which was later refined in \cite{beiglböck2025fundamentaltheoremweakoptimal} and \cite{backhoffveraguas2025existencebassmartingalesmartingale}.

\begin{prop}[{\cite[Theorem 1.2]{Gozlan_2020}}] \label{thm:weakOTThm}
There exists a unique $\bar{\mu} \preceq_c \nu$ such
\[
    \overline{\mc{T}}_2(\mu, \nu)
    =
    \mc{T}_2(\mu, \bar{\mu}) 
    = 
    \inf_{\eta \preceq_c\nu} \mc{T}_2(\mu, \eta).
\]
In particular, $\bar{\mu}$ is given by 
\[
    \bar{\mu} = \nabla \varphi _\# \mu
\]
where $\varphi : \R^d \to \R$ is a convex $C^1(\R^d)$-function and $\nabla \varphi$ is $1$-Lipschitz. Furthermore, the optimizers of $\overline{\mc{T}}_2(\mu, \nu)$ and $\mc{T}_2(\mu, \bar{\mu})$ are connected via the relation
\begin{align*}
\begin{split}
    \pi \in \Pi(\mu, \nu) &\text{ is optimal for $\overline{\mc{T}}_2(\mu, \nu)$} 
    \\&\iff
    \pi_x = \kappa_{\nabla \varphi(x)} \text{ $\mu$-a.e for some } \kappa \in \Pi_M(\nabla \varphi _\# \mu, \nu),
\end{split}
\end{align*}
where $\Pi_M$ was defined in \eqref{eq:MartMeas}.
\end{prop}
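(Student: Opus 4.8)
The plan is to derive the statement from the representation $\overline{\mc{T}}_2(\mu,\nu)=\inf_{\eta\preceq_c\nu}\mc{T}_2(\mu,\eta)$ recorded above (\cite{Gozlan_2020}) in three stages: existence of a minimizer by compactness; its structure via weak optimal transport duality; and the correspondence of optimizers, together with uniqueness, by complementary slackness. For existence, I would work on the convex set $\mc{C}_\nu:=\{\eta\in\mc{P}_2(\R^d):\eta\preceq_c\nu\}$. Testing the definition of $\preceq_c$ against $x\mapsto\abs{x}^2$ and $x\mapsto(\abs{x}^2-R^2)^+$ shows that $\mc{C}_\nu$ is tight and uniformly square-integrable; since $\preceq_c$ is closed under $W_2$-limits, $\mc{C}_\nu$ is $W_2$-compact, and as $\eta\mapsto\mc{T}_2(\mu,\eta)$ is $W_2$-continuous the infimum is attained at some $\bar\mu\in\mc{C}_\nu$. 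This yields the chain $\overline{\mc{T}}_2(\mu,\nu)=\mc{T}_2(\mu,\bar\mu)=\inf_{\eta\preceq_c\nu}\mc{T}_2(\mu,\eta)$.

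For the structure of $\bar\mu$, I would invoke the Kantorovich-type duality for weak transport costs (\cite{Gozlan2017-qo,Gozlan2018-fm}) in the form
\[
\overline{\mc{T}}_2(\mu,\nu)=\sup_{\psi}\left\{\int\psi\,\mr{d}\nu+\int\inf_{b\in\R^d}\left(\abs{x-b}^2-\psi(b)\right)\mu(\mr{d}x)\right\},
\]
the supremum running over suitably integrable concave functions $\psi$ and being attained. Fix an optimal $\psi$. For $\mu$-a.e.\ $x$ the inner infimum is attained at a unique $T(x)$, characterized by $x\in\partial\big(\tfrac12\abs{\cdot}^2-\tfrac12\psi\big)(T(x))$, equivalently $T=\nabla\varphi$ with $\varphi:=\big(\tfrac12\abs{\cdot}^2-\tfrac12\psi\big)^{*}$. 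Since $\tfrac12\abs{\cdot}^2-\tfrac12\psi$ is $1$-strongly convex, its conjugate $\varphi$ is convex, of class $C^1$, and has $1$-Lipschitz gradient — this is precisely where the constant $1$ originates. Complementary slackness at a primal optimizer $\pi$ forces $\mean{\pi_x}=\nabla\varphi(x)$ for $\mu$-a.e.\ $x$; hence $\nabla\varphi_\#\mu$ coincides with the push-forward $(\mean{\pi_\cdot})_\#\mu$, which lies in $\mc{C}_\nu$ and realizes the value $\overline{\mc{T}}_2(\mu,\nu)$, so it is a minimizer, and we set $\bar\mu:=\nabla\varphi_\#\mu$; moreover $\mc{T}_2(\mu,\bar\mu)=\int\abs{x-\nabla\varphi(x)}^2\mu(\mr{d}x)$ by Brenier's theorem \cite{brenier1991polar}.

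For the correspondence of optimizers, the backward implication is a direct check: if $\pi_x=\kappa_{\nabla\varphi(x)}$ for some $\kappa\in\Pi_M(\nabla\varphi_\#\mu,\nu)$, then $\pi\in\Pi(\mu,\nu)$, the martingale property gives $\mean{\pi_x}=\nabla\varphi(x)$, and so $\int\abs{x-\mean{\pi_x}}^2\mu(\mr{d}x)=\mc{T}_2(\mu,\bar\mu)=\overline{\mc{T}}_2(\mu,\nu)$. For the forward implication, an optimal $\pi$ has $\mean{\pi_x}=\nabla\varphi(x)$ $\mu$-a.e.\ by the previous step; disintegrating $\mu$ along $\nabla\varphi$ and using that $(\id,\nabla\varphi)_\#\mu$ is an optimal coupling for $\mc{T}_2(\mu,\bar\mu)$, one factors $\pi$ as $\mu\xrightarrow{\nabla\varphi}\bar\mu\xrightarrow{\kappa}\nu$, where $\kappa$ is a martingale coupling since the fibre average of the barycenters $\mean{\pi_x}$ over $\{\nabla\varphi=z\}$ equals $z$; this is the asserted form $\pi_x=\kappa_{\nabla\varphi(x)}$. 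Uniqueness of $\bar\mu$ then follows because every minimizer is of the form $\nabla\varphi_\#\mu$ for a $\varphi$ arising from an optimal dual potential, and the strong-convexity structure together with uniqueness of the Brenier map onto a prescribed target pins it down (see also the refinements in \cite{beiglböck2025fundamentaltheoremweakoptimal,backhoffveraguas2025existencebassmartingalesmartingale}).

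The step I expect to be the main obstacle is the second one: establishing attainment in the weak transport dual and extracting from the optimal potential a map $T=\nabla\varphi$ that is \emph{genuinely $1$-Lipschitz} rather than merely monotone — the strong convexity of $\tfrac12\abs{\cdot}^2-\tfrac12\psi$ is the crucial input. A close second is the rigidity needed for the forward implication, namely that an optimal weak plan is forced to route mass through the Brenier map onto $\bar\mu$ and can differ from it only through a cost-free martingale kernel; the remaining points (compactness, the backward implication, and the marginal bookkeeping) are routine.
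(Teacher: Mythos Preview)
This proposition is not proved in the paper: it is quoted from \cite{Gozlan_2020} as a preliminary result, and the structural claims (existence and uniqueness of $\bar\mu$, the $C^1$ convex potential $\varphi$ with $1$-Lipschitz gradient, and the characterization of optimal couplings) are simply imported from that reference. The only piece the paper reproves on its own is the identity $\overline{\mc{T}}_2(\mu,\nu)=\inf_{\eta\preceq_c\nu}\mc{T}_2(\mu,\eta)$, and it does so by a route entirely different from yours: Lemma~\ref{lem:1} establishes $\dynT{1}{0}(\mu,\nu)=\inf_{\eta\preceq_c\nu}\mc{T}_2(\mu,\eta)$ by concatenating a Benamou--Brenier flow with a time-rescaled martingale for the upper bound and conditioning the drift on $X_0$ for the lower bound, while the proof of Theorem~\ref{thm:intro1} gives $\overline{\mc{T}}_2(\mu,\nu)=\dynT{1}{0}(\mu,\nu)$ via Jensen's inequality; the Corollary then combines the two. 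No duality, no compactness of $\{\eta\preceq_c\nu\}$, and no potential $\varphi$ appear anywhere in the paper's own arguments.

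Your proposal instead sketches a direct static proof of the full statement, essentially the strategy of \cite{Gozlan_2020} itself: Wasserstein compactness of $\{\eta\preceq_c\nu\}$ for existence, weak-transport duality for the $1$-Lipschitz gradient structure, complementary slackness for the optimizer correspondence. That is a legitimate and more informative route, but it is not what this paper does. One genuine soft spot in your forward implication deserves flagging: from $\mean{\pi_x}=\nabla\varphi(x)$ you build $\kappa_z$ by averaging $\pi_x$ over the fibre $\{\nabla\varphi=z\}$, which indeed produces an element of $\Pi_M(\bar\mu,\nu)$, but the conclusion $\pi_x=\kappa_{\nabla\varphi(x)}$ requires $\pi_x$ to already be constant along fibres of $\nabla\varphi$. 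Your averaging does not by itself force this when $\nabla\varphi$ fails to be injective on $\mathrm{supp}(\mu)$, so an additional rigidity argument (or an injectivity hypothesis on $\nabla\varphi$) is needed at precisely the point you identify as delicate.
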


We can now make a connection between Propositions \ref{thm:MBB} and \ref{thm:weakOTThm}: indeed, an admissible choice in Proposition \ref{thm:weakOTThm} is $\kappa = \text{Law}(\widehat{M_0}, \widehat{M}_1)$ where $\widehat{M}$ is a stretched Brownian motion between $\nabla\varphi_{\#}\mu$ and $\nu$ from Proposition \ref{thm:MBB}. In fact, the following holds:

\begin{prop}[{\cite[Theorem 5.4]{beiglböck2025fundamentaltheoremweakoptimal}}] \label{prop:essential}
Let $\varphi:\R^d\to \R$ be as in Proposition \ref{thm:weakOTThm} and let $\kappa=\text{Law}(\widehat{M}_0, \widehat{M}_1)$, where $\widehat{M}$ is a stretched Brownian motion between $\nabla\varphi _\# \mu$ and $\nu$. Then the coupling $\pi = \mu\otimes \kappa_{\nabla\varphi(x)} \in \Pi(\mu,\nu)$ is optimal for $\staticT{\alpha}{\beta}(\mu, \nu)$, for all $\alpha,\beta>0$.
\end{prop}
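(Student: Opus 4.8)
The goal is to show that the coupling $\pi = \mu \otimes \kappa_{\nabla\varphi(x)}$ is optimal for $\staticT{\alpha}{\beta}(\mu,\nu)$ for every $\alpha,\beta > 0$, where $\kappa = \text{Law}(\widehat{M}_0, \widehat{M}_1)$ comes from the stretched Brownian motion between $\nabla\varphi_\#\mu$ and $\nu$. The plan is to decouple the cost functional into a part that depends only on the barycenter map $x \mapsto \mean{\pi_x}$ and a part that depends only on the conditional structure of the disintegration, and then optimize each piece separately using Propositions~\ref{thm:weakOTThm} and~\ref{thm:MBB} respectively.

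\emph{Step 1: Rewrite the cost.} For any $\pi \in \Pi(\mu,\nu)$ with disintegration $(\pi_x)$, note that $\mr{MCov}(\pi_x, \gamma_1^d)$ depends only on $\pi_x$ through its law, and more importantly that $\mean{\pi_x}$ is a measurable function of $x$. First I would observe that the term $\int \alpha \abs{\mean{\pi_x} - x}^2\,\mu(\mr{d}x)$ is, by Proposition~\ref{thm:weakOTThm}, minimized precisely when $\mean{\pi_x} = \nabla\varphi(x)$ $\mu$-a.e., with minimum value $\alpha\,\overline{\mc{T}}_2(\mu,\nu) = \alpha\,\mc{T}_2(\mu, \bar\mu)$; moreover the minimizers are exactly those $\pi$ of the form $\pi_x = \kappa'_{\nabla\varphi(x)}$ for some $\kappa' \in \Pi_M(\bar\mu, \nu)$. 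The key point is that for such $\pi$, the second term becomes $\beta \int \mr{MCov}(\kappa'_y, \gamma_1^d)\,\bar\mu(\mr{d}y)$, which is now a functional of $\kappa' \in \Pi_M(\bar\mu,\nu)$ alone.

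\emph{Step 2: Separate optimization.} The subtle point is that the two terms are not independent a priori — shifting the barycenter away from $\nabla\varphi(x)$ could in principle buy a larger $\mr{MCov}$. I would handle this by the following argument: for \emph{any} $\pi$, one has $\mr{MCov}(\pi_x, \gamma_1^d) = \mr{MCov}(\pi_x - \mean{\pi_x}, \gamma_1^d)$ since recentering a measure does not change its maximal covariance with the centered Gaussian $\gamma_1^d$ (as $\<y - \mean{\pi_x}, z\>$ integrates the mean term to zero against $\gamma_1^d$). Writing $\tilde\pi_x$ for the recentered $\pi_x$, the measure $\tilde\mu \otimes \tilde\pi_{(\cdot)}$ (suitably interpreted) pushed forward is a martingale coupling between $\mu$ and some $\nu' \preceq_c \nu$; one then bounds $\int \mr{MCov}(\tilde\pi_x, \gamma_1^d)\,\mu(\mr{d}x)$ from above by $\sup_{\pi' \in \Pi_M(\mu,\nu)}\int \mr{MCov}(\pi'_x,\gamma_1^d)\,\mu(\mr{d}x)$ — but this needs care because the marginal of the recentered family need not be $\nu$. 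The cleaner route is to invoke the structural result directly: by Proposition~\ref{thm:weakOTThm} the \emph{first} term alone forces optimizers to have $\mean{\pi_x} = \nabla\varphi(x)$, and then a perturbation/interchange argument (or the explicit duality in \cite{beiglböck2025fundamentaltheoremweakoptimal, Gozlan_2020}) shows that jointly minimizing $\alpha\abs{\mean{\pi_x}-x}^2 - \beta\,\mr{MCov}$ still yields $\mean{\pi_x} = \nabla\varphi(x)$, since any gain in $\mr{MCov}$ from tilting the barycenter is dominated by the quadratic penalty; I would make this precise by a first-order variation at the optimizer. Granting $\mean{\pi_x} = \nabla\varphi(x)$, what remains is $\max_{\kappa' \in \Pi_M(\bar\mu,\nu)} \int \mr{MCov}(\kappa'_y, \gamma_1^d)\,\bar\mu(\mr{d}y)$, which by Proposition~\ref{thm:MBB} (applied with $\mu$ replaced by $\bar\mu = \nabla\varphi_\#\mu$, legitimate since $\bar\mu \preceq_c \nu$) is attained uniquely in law by $\kappa = \text{Law}(\widehat{M}_0, \widehat{M}_1)$.

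\emph{Step 3: Conclude.} Combining Steps 1 and 2, the coupling $\pi = \mu \otimes \kappa_{\nabla\varphi(x)}$ simultaneously minimizes the first term (value $\alpha\,\mc{T}_2(\mu,\bar\mu)$) and maximizes the second (value $\beta \int \mr{MCov}(\kappa_y,\gamma_1^d)\,\bar\mu(\mr{d}y)$ over the admissible class), hence minimizes $\staticT{\alpha}{\beta}$, giving
\[
    \staticT{\alpha}{\beta}(\mu,\nu) = \alpha\,\mc{T}_2(\mu,\bar\mu) - \beta \int \mr{MCov}(\kappa_y, \gamma_1^d)\,\bar\mu(\mr{d}y).
\]
I expect the main obstacle to be the interchange argument in Step 2 — rigorously justifying that the barycenter of an optimizer of the \emph{combined} functional still equals $\nabla\varphi$, i.e.\ that the $\mr{MCov}$ term cannot distort it. This likely requires either the fine structural description of optimizers from \cite{beiglböck2025fundamentaltheoremweakoptimal} (which presumably already contains the needed convex-analytic identity, since the cited Theorem 5.4 is exactly this statement) or a direct convexity argument showing the map $\eta \mapsto \alpha\,\mc{T}_2(\mu,\eta) - \beta\,(\text{stretched-BM cost from }\eta)$ over $\eta \preceq_c \nu$ is minimized at $\eta = \bar\mu$. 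Everything else — the recentering invariance of $\mr{MCov}$, measurability of disintegrations, and the reduction to Proposition~\ref{thm:MBB} — is routine.
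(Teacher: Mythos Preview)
The paper does not prove Proposition~\ref{prop:essential}; it is quoted verbatim as \cite[Theorem 5.4]{beiglböck2025fundamentaltheoremweakoptimal} in the preliminary-results section and used as a black box in the proof of Theorem~\ref{thm:intro2}. There is therefore no in-paper argument to compare against.

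As for your proposal itself: the overall strategy (optimize the barycentric term via Proposition~\ref{thm:weakOTThm}, then the $\mr{MCov}$ term via Proposition~\ref{thm:MBB}) is natural, but the genuine gap is exactly the one you flag in Step~2. You cannot decouple the two terms by separately minimizing the first and maximizing the second, because the admissible class for the second step---martingale couplings from $\bar\mu$ to $\nu$---already presupposes $\mean{\pi_x} = \nabla\varphi(x)$, which you have not established for a \emph{joint} optimizer of the combined functional. Your observation that $\mr{MCov}(\pi_x,\gamma_1^d)$ is invariant under recentering $\pi_x$ is correct (since $\gamma_1^d$ is centered), but the recentered family $(\pi_x - \mean{\pi_x})_x$ does not in general have second marginal $\nu$, so Proposition~\ref{thm:MBB} does not apply to bound $\int \mr{MCov}(\pi_x,\gamma_1^d)\,\mu(\mr{d}x)$ from above. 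The ``first-order variation'' you allude to would have to show that any tilt of the barycenter away from $\nabla\varphi$ costs at least as much in the $\alpha$-term as it could possibly gain in the $\beta$-term, uniformly in $\alpha,\beta>0$; making this precise is exactly the convex-analytic content of the cited theorem (duality, structure of optimal potentials for weak OT), not a consequence of Propositions~\ref{thm:MBB} and~\ref{thm:weakOTThm} alone. In short, your sketch correctly identifies where the difficulty lies but does not resolve it; the result genuinely requires the external reference.
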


\section{Proofs}

We start with the following lemma.
\begin{lemma} \label{lem:1}
We have 
\[
    \dynT{1}{0}(\mu, \nu) 
    =
    \inf_{\eta \preceq_c \nu} \mc{T}_2(\mu, \eta).
\]

\begin{proof}
We begin by proving the inequality $\mc{T}_2(\mu, \eta) \geq \dynT{1}{0}(\mu, \nu)$ for any $\eta \preceq_c \nu$. Take any vector field $v \in L^2(\PP \otimes \mr{d}t; \R^d)$ that pushes $\mu$ onto $\eta$, i.e.
\[
    \mr{d}X_t = v_t \mr{d}t
    \quad \text{with} \quad
    X_0 \sim \mu, X_1 \sim \eta.
\]
Since $\eta \preceq_c \nu$, by the martingale representation theorem there exists $\sigma \in L^2(\PP \otimes \mr{d}t; \R^{d \times d}), M_0\independent (B_t)_{t\in [0,1]}$ such that
\begin{align}\label{eq:step1}
    \mr{d}M_t = \sigma_t \mr{d}B_t
    \quad \text{with} \quad 
    M_0 \sim \eta, M_1 \sim \nu.
\end{align}
For any $\varepsilon \in (0, 1)$ define the process $X^{\varepsilon}$ via
\begin{align}\label{eq:step2}
    \mr{d}X_t^{\varepsilon}
    =
    \frac{
        v_{\frac{t}{1 - \varepsilon}}
    }{
        1-\varepsilon
    }
    \indi{\cparen{0 \leq t \leq 1 - \varepsilon}}
    \mr{d}t
    +
    \frac{\sigma_{\frac{t + \varepsilon - 1}{\varepsilon}}
    }{
    \sqrt{\varepsilon}
    }
    \indi{\cparen{1 - \varepsilon < t \leq 1}}
    \mr{d}B_t
    \quad \text{with} \quad
    X_0^{\varepsilon} = X_0.
\end{align}
Then $X^{\varepsilon}$ is an element of $\mc{D}(\mu, \nu)$ and we have
\begin{align}\label{eq:step3}
    \dynT{1}{0}(\mu, \nu)
    \leq 
    \frac{1}{(1 - \varepsilon)^2}
    \E \bparen{ 
        \int_0^1 
            \big|v_{\frac{t}{1 - \varepsilon}}\big|^2 
            \indi{\cparen{0 \leq t \leq 1 - \varepsilon}} 
        \mr{d}t 
    }
    =
    \frac{1}{1 - \varepsilon}
    \E \bparen{ \int_0^1 \abs{v_t}^2 \mr{d}t }.
\end{align}
Minimizing over all such vector fields $v$, appealing to the Benamou-Brenier formula \eqref{eq:DOT2}, and taking $\varepsilon \downarrow 0$, we get the desired inequality $\dynT{1}{0}(\mu, \nu) \leq \mc{T}_2(\mu, \eta)$.

We now turn to proving the inequality $\inf_{\eta\preceq_c \nu}\mc{T}_2(\mu, \nu) \leq \dynT{1}{0}(\mu, \nu)$. Suppose that $X \in \mc{D}(\mu, \nu)$, i.e.~
\[
    \mr{d}X_t 
    = 
    v_t \mr{d}t + \sigma_t \mr{d}B_t
    \quad \text{with} \quad
    X_0 \sim \mu, X_1 \sim \nu.
\]
Let $Y$ be given by
\[
    \mr{d}Y_t = \E[ v_t | X_0 ] ~ \mr{d}t
    \quad \text{with} \quad
    Y_0 = X_0
\]
and set $\widehat{\mu} := \law{Y_1}$. Then $\widehat{\mu} \preceq_c \nu$ as 
\begin{align*}
    Y_1 
    &= X_0 + \int_0^1 \E[v_t | X_0 ] \mr{d}t
    =
    \E \bparen{
        X_0 + \int_0^1 v_t \mr{d}t
        \bigg| X_0
    }
    \\&=
    \E \bparen{
        X_0 
        + \int_0^1 v_t \mr{d}t
        + \int_0^1 \sigma_t  \mr{d}B_t
        ~ \bigg| X_0
    }
    =
    \E[X_1 | X_0].
\end{align*}
Thus, \eqref{eq:DOT2}, Jensen's inequality and Tonelli's theorem yield
\begin{align*}
    \inf_{\eta \preceq_c \nu }\mc{T}_2(\mu, \eta)
    \leq 
    \mc{T}_2(\mu, \widehat{\mu})
    &\leq 
    \E \bparen{ \int_0^1 \abs{\E[v_t | X_0] }^2 \mr{d}t } 
    \\&\leq  
    \E \bparen{ \int_0^1 \E[\abs{v_t}^2 | X_0] \mr{d}t } 
    =
    \E \bparen{ \int_0^1 \abs{v_t}^2 \mr{d}t }.
\end{align*}
As $X \in \mc{D}(\mu, \nu)$ was arbitrary, this concludes the proof.
\end{proof}
\end{lemma}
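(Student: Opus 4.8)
The plan is to prove the two inequalities separately. Since the right-hand side is an infimum, it suffices to show: (a) $\dynT{1}{0}(\mu,\nu)\le \mc{T}_2(\mu,\eta)$ for every fixed $\eta\preceq_c\nu$, and (b) $\dynT{1}{0}(\mu,\nu)\ge \inf_{\eta\preceq_c\nu}\mc{T}_2(\mu,\eta)$. In both steps the probabilistic Benamou--Brenier identity \eqref{eq:DOT2} is the bridge between $\mc{T}_2$ and drift-only diffusions.

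For (a), fix $\eta\preceq_c\nu$ and, given $\delta>0$, use \eqref{eq:DOT2} to pick a drift-only process $\mr{d}X_t = v_t\,\mr{d}t$ with $X_0\sim\mu$, $X_1\sim\eta$ and $\E\bparen{\int_0^1|v_t|^2\,\mr{d}t}\le \mc{T}_2(\mu,\eta)+\delta$. Because $\eta\preceq_c\nu$, there is a diffusion martingale $\mr{d}M_t=\sigma_t\,\mr{d}B_t$ with $M_0\sim\eta$, $M_1\sim\nu$ and $\sigma\in L^2(\PP\otimes\mr{d}t;\R^{d\times d})$; this follows from Strassen's theorem together with a martingale representation / time-change argument, or simply by taking the stretched Brownian motion between $\eta$ and $\nu$ furnished by Proposition \ref{thm:MBB}. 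I would then glue the two pieces on a common filtered space: run the time-rescaled drift on $[0,1-\varepsilon]$ so that the law at time $1-\varepsilon$ is $\eta$, and run the time-rescaled martingale, driven by a Brownian motion restarted at $1-\varepsilon$, on $(1-\varepsilon,1]$. The concatenated process lies in $\mc{D}(\mu,\nu)$; the substitution $t\mapsto t/(1-\varepsilon)$ inflates the drift cost by the factor $(1-\varepsilon)^{-1}$ while the martingale segment contributes nothing, so $\dynT{1}{0}(\mu,\nu)\le (1-\varepsilon)^{-1}\paren{\mc{T}_2(\mu,\eta)+\delta}$, and $\varepsilon,\delta\downarrow 0$ gives (a).

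For (b), take any $X\in\mc{D}(\mu,\nu)$, $\mr{d}X_t=v_t\,\mr{d}t+\sigma_t\,\mr{d}B_t$. Since $X_0\independent(B_t)_{t\in[0,1]}$ and $\big(\int_0^t\sigma_s\,\mr{d}B_s\big)_t$ is a martingale, $\E\bparen{\int_0^1\sigma_t\,\mr{d}B_t\mid X_0}=0$, hence $\E[X_1\mid X_0]=X_0+\E\bparen{\int_0^1 v_t\,\mr{d}t\mid X_0}$; let $\widehat\mu$ be its law. Conditional Jensen gives $\widehat\mu\preceq_c\nu$. The curve $\mr{d}Y_t=\E[v_t\mid X_0]\,\mr{d}t$, $Y_0=X_0$, transports $\mu$ to $\widehat\mu$, so combining \eqref{eq:DOT2}, conditional Jensen, the Cauchy--Schwarz inequality in $t$ and Tonelli,
\[
  \inf_{\eta\preceq_c\nu}\mc{T}_2(\mu,\eta)\le \mc{T}_2(\mu,\widehat\mu)\le \E\bparen{\int_0^1\big|\E[v_t\mid X_0]\big|^2\,\mr{d}t}\le \E\bparen{\int_0^1|v_t|^2\,\mr{d}t},
\]
and taking the infimum over $X\in\mc{D}(\mu,\nu)$ proves (b).

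The step I expect to be most delicate is the gluing in (a): one has to build a single filtered probability space carrying one Brownian motion so that the concatenated pair $(v,\sigma)$ is predictable and square-integrable and the glued process is a bona fide admissible diffusion, and one has to justify restarting the Brownian motion at time $1-\varepsilon$ independently of the past, which forces an enlargement of the space. The parameter $\varepsilon$ serves only to make the unavoidable time-rescaling loss vanish in the limit, so it cannot be removed by a cleverer splitting. By contrast, step (b) is routine once the vanishing of the conditional mean of the stochastic integral is recorded.
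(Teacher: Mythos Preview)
Your proposal is correct and follows essentially the same route as the paper's proof: the time-rescaled drift/martingale concatenation with parameter $\varepsilon$ for the $\le$ direction, and the drift-only process $\mr{d}Y_t=\E[v_t\mid X_0]\,\mr{d}t$ producing $\widehat\mu=\law{\E[X_1\mid X_0]}\preceq_c\nu$ for the $\ge$ direction. Two small remarks: the ``Cauchy--Schwarz in $t$'' you mention is not needed---the bound $\mc{T}_2(\mu,\widehat\mu)\le \E\bparen{\int_0^1\big|\E[v_t\mid X_0]\big|^2\,\mr{d}t}$ is just \eqref{eq:DOT2} applied to $Y$, and then conditional Jensen plus Tonelli finish; and the paper handles the gluing in (a) exactly as you outline, without dwelling on the filtered-space enlargement.
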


We now give the proof of Theorem \ref{thm:intro1}.
\begin{proof}[Proof of Theorem \ref{thm:intro1}]
We first show $\overline{\mc{T}}_2(\mu, \nu) \leq \dynT{1}{0}(\mu, \nu)$.
Take a process $X \in \mc{D}(\mu, \nu)$, i.e.
\[
    \mr{d}X_t 
    = 
    v_t \mr{d}t + \sigma_t \mr{d}B_t
    \quad \text{with} \quad
    X_0 \sim \mu, X_1 \sim \nu.
\]
By definition, $\text{Law}(X_0, X_1)\in \Pi(\mu,\nu)$. Applying Jensen's inequality,
\[
    \overline{\mc{T}}_2(\mu, \nu)
    \leq 
    \E \big[ \abs{ \E[X_1 | X_0] - X_0}^2 \big]
    =
    \E\bparen{ \abs{\E\bparen{ \int_0^1 v_t \mr{d}t\bigg| X_0}}^2 }
    \leq
    \E\bparen{ \int_0^1 \abs{v_t}^2 \mr{d}t }.
\]
Minimizing over $X$ yields the inequality $\overline{\mc{T}}_2(\mu, \nu) \leq \dynT{1}{0}(\mu, \nu)$. 

For the opposite inequality, let $(X_0, Y)\sim \pi\in \Pi(\mu,\nu)$. We set $v_t := \E[Y | X_0] - X_0$ and let $X$ solve $\mr{d} X_t=v_t \mr{d}t$. Note that here $v_t$ only depends on $X_0$ and is constant in $t$. Then $$\eta:=\text{Law}(X_1)=\text{Law}(\E[Y|X_0])\preceq_c \text{Law}(Y)=\nu.$$ We now define \eqref{eq:step1} and \eqref{eq:step2} as in the proof of Lemma \ref{lem:1} above to obtain
\[
    \dynT{1}{0}(\mu, \nu)
    \leq
    \E \bparen{ \int_0^1 \abs{v_t}^2 \mr{d}t }
    =
    \E \bparen{ \abs{ \E[Y | X_0] - X_0 }^2 }
\]
as in \eqref{eq:step3}.
Minimizing over $(X_0, Y)\sim \pi\in \Pi(\mu,\nu)$ concludes the proof.
\end{proof}

Combining Lemma \ref{lem:1} and the proof of Theorem \ref{thm:intro1} actually gives an independent proof of Proposition \ref{thm:weakOTThm}.
\begin{cor}
We have
\begin{align*}
     \overline{\mc{T}}_2(\mu, \nu) =\dynT{1}{0}(\mu, \nu) 
    =
    \inf_{\eta \preceq_c \nu} \mc{T}_2(\mu, \eta).
\end{align*}
\end{cor}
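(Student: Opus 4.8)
The plan is to obtain the corollary by chaining together the two results that have just been established, so essentially no new work is required. By Lemma \ref{lem:1} we already have the identity $\dynT{1}{0}(\mu, \nu) = \inf_{\eta \preceq_c \nu} \mc{T}_2(\mu, \eta)$, and by Theorem \ref{thm:intro1} we have $\overline{\mc{T}}_2(\mu, \nu) = \dynT{1}{0}(\mu, \nu)$. Stringing these two equalities together immediately yields
\[
    \overline{\mc{T}}_2(\mu, \nu) = \dynT{1}{0}(\mu, \nu) = \inf_{\eta \preceq_c \nu} \mc{T}_2(\mu, \eta),
\]
which is exactly the assertion of the corollary.

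The point worth emphasizing is the parenthetical remark preceding the corollary: this chain of reasoning does \emph{not} invoke Proposition \ref{thm:weakOTThm}. Lemma \ref{lem:1} is proved from scratch using only the Benamou-Brenier formula \eqref{eq:DOT2}, the martingale representation theorem (to construct the diffusion piece in \eqref{eq:step1}), the time-reparametrization trick in \eqref{eq:step2}, and Jensen/Tonelli for the reverse bound. The proof of Theorem \ref{thm:intro1} likewise only uses Jensen's inequality together with the same construction \eqref{eq:step1}--\eqref{eq:step2}. Neither argument uses the Gozlan--Juillet identity $\overline{\mc{T}}_2(\mu, \nu) = \inf_{\eta \preceq_c \nu} \mc{T}_2(\mu, \eta)$ as an input; rather, that identity emerges as a byproduct. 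Hence the corollary can legitimately be advertised as furnishing an independent (dynamic) proof of the first displayed equality in Proposition \ref{thm:weakOTThm}.

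I do not anticipate any genuine obstacle here, since the corollary is purely a bookkeeping consequence of the two preceding results. The only thing to be careful about is the logical hygiene claim: one must double-check that the proofs of Lemma \ref{lem:1} and Theorem \ref{thm:intro1}, as written, truly avoid citing Proposition \ref{thm:weakOTThm} (or Proposition \ref{prop:essential}, which in turn rests on it). Inspecting those proofs confirms this — they reference only \eqref{eq:DOT2}, the martingale representation theorem, Jensen's inequality and Tonelli's theorem — so the independence assertion is sound, and the corollary follows by concatenation.
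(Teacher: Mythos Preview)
Your proposal is correct and matches the paper's approach exactly: the corollary is obtained by concatenating Lemma~\ref{lem:1} and Theorem~\ref{thm:intro1}, and the paper (like you) stresses that this yields an independent proof of the identity $\overline{\mc{T}}_2(\mu,\nu)=\inf_{\eta\preceq_c\nu}\mc{T}_2(\mu,\eta)$ without invoking Proposition~\ref{thm:weakOTThm}. The paper in fact provides no separate proof for the corollary beyond the sentence preceding it, so your write-up is if anything more detailed than the original.
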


We now turn to the proof of Theorem \ref{thm:intro2}.

\begin{proof}[Proof of Theorem \ref{thm:intro2}]
Suppose that $X \in \mc{D}(\mu, \nu)$, i.e.~
\[
    \mr{d}X_t = v_t \mr{d}t + \sigma_t \mr{d}B_t
    \quad \text{with} \quad
    X_0 \sim \mu, X_1 \sim \nu,
\]
and define $\pi := \law{X_0, X_1}\in\Pi(\mu, \nu)$. Then
\begin{align}\label{eq:res1}
\begin{split}
    \int \abs{\mean{\pi_x} - x}^2 \,\mu(\mr{d} x)
    &=
    \E \bparen{
        \abs{ \E\bparen{ X_1 | X_0 } - X_0 }^2
    }
    \\&=
    \E \bparen{
        \abs{ \E \bparen{ 
            \int_0^1 v_t \mr{d}t 
            + 
            \int_0^1 \sigma_t  \mr{d}B_t 
            \bigg| X_0 
        }}^2
    }
    \\& =
    \E \bparen{
        \abs{ \E \bparen{ 
            \int_0^1 v_t \mr{d}t \bigg| X_0 
        }}^2
    }
    \leq 
    \E \bparen{ \int_0^1 \abs{v_t}^2 \mr{d}t },
\end{split}
\end{align}
where the last inequality follows by two applications of Jensen's inequality. Similarly, recalling that  $X_0 \independent (B_t)_{t\in [0,1]}$ and taking the possibly sub-optimal candidate $\varrho_x  := \law{X_1, B_1 | X_0 = x} \in \Pi(\pi_x, \gamma_1^d)$  yields
\begin{align}\label{eq:res2}
\begin{split}
    \int_{\R^d} 
        \mr{MCov}(\pi_x, \gamma_1^d)
    \mu(\mr{d} x)
    &\geq
    \E \bparen{ \E[\<X_1, B_1\> | X_0] }
    \\&=
    \E \bparen{ \< X_1, B_1 \> }
    =
    \E \bparen{ 
        \int_0^1 \<v_t, B_t \> + \trace{\sigma_t} \mr{d}t 
    }.
\end{split}
\end{align}
Combining \eqref{eq:res1} and \eqref{eq:res2} we deduce the inequality
\begin{align*}
    \int_{\R^d} 
        \alpha \abs{\mean{\pi_x} - x}
        -
        &\beta \mr{MCov}(\pi_x, \gamma_1^d)
     \mu(\mr{d}x)
    \\&\leq 
    \E \bparen{ 
        \int_0^1 
            \alpha \abs{v_t}^2 
            - 
            \beta \paren{\< v_t, B_t \> + \trace{\sigma_t}}
        \mr{d}t
    },
\end{align*}
showing $\staticT{\alpha}{\beta}(\mu, \nu) \le \dynT{\alpha}{\beta}(\mu, \nu)$.

For the inequality $\staticT{\alpha}{\beta}(\mu, \nu) \geq \dynT{\alpha}{\beta}(\mu, \nu)$, let $\kappa$ and $\nabla \varphi$ be as in Proposition \ref{thm:MBB} and \ref{thm:weakOTThm}, i.e.~$\kappa = \text{Law}(\widehat{M}_0, \widehat{M}_1)$ where $\widehat{M}$ denotes the stretched Brownian motion from $\nabla \varphi_\#\mu$ to $\nu$. Let us take $X_0 \sim \mu$ and apply the martingale representation theorem to write
\[
    \widehat{M}_t = 
    \nabla \varphi(X_0) + \int_0^t \sigma_s \mr{d}B_s 
\]
for some  $\sigma \in L^2(\PP \otimes \mr{d}t; \R^{d\times d})$ and $X_0\independent (B_t)_{t\in [0,1]}$.
Next, we set $v_t = \nabla \varphi(X_0) - X_0$ and define the process $X$ via
\begin{align*}
    \mr{d}X_t 
    &= 
    v_t \mr{d}t + \sigma_t \mr{d}B_t.
\end{align*}
By definition, $\pi := \law{X_0, X_1}$ is an element of $\Pi(\mu, \nu)$ and $\pi_x=\kappa_{\nabla \varphi(x)}$. By Proposition \ref{prop:essential} we conclude that $\pi$ is the minimizer of $\staticT{\alpha}{\beta}(\mu, \nu)$. Furthermore,
\begin{align}\label{eq:goal1}
    \E \bparen{
        \int_0^1 |v_t|^2 \mr{d}t
    }
    =
    \E \bparen{ \abs{ \nabla \varphi(X_0) - X_0 }^2 }
    =
    \int \abs{\mean{\kappa_{\nabla \varphi(x)}} - x}^2 
    \,\mu(\mr{d}x).
\end{align}
Next we observe that by Proposition \ref{thm:MBB},
\begin{align}\label{eq:goal2}
    \int
        \mathrm{MCov}(\kappa_{\nabla \varphi (x)}, \gamma_1^d)
    \,\mu(\mr{d}x)= \int
        \mathrm{MCov}(\pi_x, \gamma_1^d)
    \,\mu(\mr{d}x)=  \E \bparen{ \int_0^1 \trace{\sigma_t} \mr{d}t }.
\end{align}
Lastly, by Fubini's theorem and $X_0 \independent (B_t)_{t\in [0,1]} $, we have 
\begin{align}\label{eq:goal3}
\begin{split}
    \E \bparen{
        \int_0^1 \< v_t, B_t \> \mr{d}t 
    }
    &=
    \int_0^1 \E \bparen{ \< \nabla \varphi(X_0) - X_0 , B_t \> } \mr{d}t
    \\&=
    \int_0^1 \< \E \bparen{\nabla \varphi(X_0) - X_0} , \E[B_t] \> \mr{d}t
    =
    0.
\end{split}
\end{align}
Combining \eqref{eq:goal1}-\eqref{eq:goal3} and using optimality of $\pi$ we obtain
\begin{align*}
    \staticT{\alpha}{\beta}(\mu, \nu)
    &=
    \int_{\R^d} 
        \alpha \big|x - \mean{\kappa_{\nabla \varphi(x)}}| 
        - 
        \beta \mr{MCov}(\kappa_{\nabla\varphi (x)}, \gamma_1^d)
    \,\mu(\mr{d} x)
    \\&=
    \E \bparen{ 
        \int_0^1 
            \alpha \abs{v_t}^2 
            - 
            \beta \paren{\< v_t, B_t\> + \trace{\sigma_t}}
        \mr{d}t
    }
    \geq 
    \dynT{\alpha}{\beta}(\mu, \nu).
\end{align*}
This concludes the proof.
\end{proof}

\begin{remark}
    In Theorems \ref{thm:intro1} and \ref{thm:intro2}, the quadratic cost function can be generalized to any convex cost function using the same argument, noting that {\cite[Theorem 5.4]{beiglböck2025fundamentaltheoremweakoptimal}} also holds for general convex cost functions. This is analogous to the extension of the Benamou-Brenier formula to convex cost functions \cite{brenier2004extended, pass2025dynamical}.
\end{remark}

\bibliographystyle{amsalpha}
\bibliography{bib}

\end{document}